\documentclass[preprint,12pt]{elsarticle}


\usepackage{graphicx}
\usepackage[cp1251]{inputenc}
\usepackage{amssymb}
\usepackage{amsthm}
\usepackage{cmap}


\usepackage{amsmath}



\biboptions{sort&compress}

\newcommand{\sysn}{\left\{\begin{array}{rcl}}
\newcommand{\sysk}{\end{array}\right.}

\newtheorem{theorem}{Theorem}[section]

\theoremstyle{example}

\newtheorem{proposition}[theorem]{Proposition}
\theoremstyle{definition}
\newtheorem{definition}[theorem]{Definition}


\journal{...}

\begin{document}

\title{The projectively Hurewicz property is $t$-invariant}

\author{Alexander V. Osipov}

\address{Krasovskii Institute of Mathematics and Mechanics, \\ Ural Federal
 University, Yekaterinburg, Russia}

\ead{OAB@list.ru}

\begin{abstract}
A space $X$ is {\it projectively Hurewicz} provided every
separable metrizable continuous image of $X$ is Hurewicz.

In this paper we prove that the projectively Hurewicz property is
$t$-invariant, i.e., if $C_p(X)$ is homeomorphic to $C_p(Y)$ and
$X$ is projectively Hurewicz, then $Y$ is projectively Hurewicz,
too.

\end{abstract}

\begin{keyword} projectively Hurewicz space \sep selection
principles \sep $t$-invariant \sep $C_p$-spaces

\MSC[2010] 54C35 \sep 54D20 \sep 54C05

\end{keyword}

\maketitle 


\section{Introduction}

Let $\mathcal{P}$ be a topological property. A.V. Arhangel'skii
calls $X$ {\it projectively $\mathcal{P}$} if every second
countable continuous image of $X$ is $\mathcal{P}$
\cite{arch3,arh}. The projective selection principles were
introduced and first time considered in \cite{koc}. Lj.D.R.
Ko\v{c}inac characterized the classical covering properties of
Menger, Rothberger, Hurewicz and Gerlits-Nagy in term of
continuous images in $\mathbb{R}^{\omega}$. Characterizations of
the classical covering properties in terms a selection principle
restricted to countable covers by cozero sets are given in
\cite{bcm}. In \cite{os10,os19} we obtained the functional
characterizations of all projective versions of the selection
properties in the Scheepers Diagram.

Let us recall that a topological space is {\it Hurewicz} if for
every sequence $(\mathcal{U}_n: n\in \mathbb{N})$ of open covers
of $X$, there is a sequence $(\mathcal{V}_n: n\in \mathbb{N})$
such that for every $n$, $\mathcal{V}_n$ is a finite subfamily of
$\mathcal{U}_n$ and every point of $X$ is contained in $\bigcup
\mathcal{V}_n$ for all but finitely many n's.

Recall that if $C_p(X)$ and $C_p(Y)$ are homeomorphic (linearly
homeomorphic, uniform homeomorphic), we say that the spaces $X$
and $Y$ are {\it $t$-equivalent ($l$-equivalent, $u$-equivalent)}.
The properties preserved by $t$-equivalence ($l$-equivalence,
$u$-equivalence) we call {\it $t$-invariant ($l$-invariant,
$u$-invariant)} \cite{arh2}.

\medskip
The following interesting results were obtained:

$\bullet$ (Lj.D.R. Ko\v{c}inac)  A space is Hurewicz if and only
if it is Lindel\"{o}f and projectively Hurewicz \cite{koc}.

$\bullet$ (L. Zdomskyy)  The Hurewicz property is $l$-invariant
(Corollary 7 in \cite{zd}).

$\bullet$ (N.V. Velichko) The Lindel\"{o}f property is
$l$-invariant \cite{vel}.

$\bullet$ (M. Krupski)  The projectively  Hurewicz property is
$l$-invariant (Theorem 1.5 in \cite{kr}).

In this paper we prove that the projectively  Hurewicz property is
$t$-invariant.

\section{Main definitions and notation}

Throughout this paper, all spaces are assumed to be Tychonoff. The
set of positive integers is denoted by $\mathbb{N}$. Let
$\mathbb{R}$ be the real line, we put $\mathbb{I}=[0,1]\subset
\mathbb{R}$, and let $\mathbb{Q}$ be the rational numbers. For a
space $X$, we denote by $C_p(X)$ the space of all real-valued
continuous functions on $X$ with the topology of pointwise
convergence. The symbol $\bf{0}$ stands for the constant function
to $0$. Since $C_p(X)$ is a homogenous space we may always
consider the point $\bf{0}$ when studying local properties of this
space.

A basic open neighborhood of $\bf{0}$ is of the form $[F,
(-\epsilon, \epsilon)]=\{f\in C(X): f(F)\subset (-\epsilon,
\epsilon)\}$, where $F$ is a finite subset of $X$ and
$\epsilon>0$.

 We recall that a subset of $X$ that is the
 complete preimage of zero for a certain function from~$C(X)$ is called a zero-set.
A subset $O\subseteq X$  is called  a cozero-set (or functionally
open) of $X$ if $X\setminus O$ is a zero-set.

\medskip



Many topological properties are characterized in terms
 of the following classical selection principles.
 Let $\mathcal{A}$ and $\mathcal{B}$ be sets consisting of
families of subsets of an infinite set $X$. Then:


$S_{fin}(\mathcal{A},\mathcal{B})$ is the selection hypothesis:
for each sequence $(A_{n}: n\in \mathbb{N})$ of elements of
$\mathcal{A}$ there is a sequence $(B_{n}: n\in \mathbb{N})$ of
finite sets such that for each $n$, $B_{n}\subseteq A_{n}$, and
$\bigcup_{n\in\mathbb{N}}B_{n}\in\mathcal{B}$.

$U_{fin}(\mathcal{A},\mathcal{B})$ is the selection hypothesis:
whenever $\mathcal{U}_1$, $\mathcal{U}_2, ... \in \mathcal{A}$ and
none contains a finite subcover, there are finite sets
$\mathcal{F}_n\subseteq \mathcal{U}_n$, $n\in \mathbb{N}$, such
that $\{\bigcup \mathcal{F}_n : n\in \mathbb{N}\}\in \mathcal{B}$.



\bigskip


\medskip
In this paper, by a cover we mean a nontrivial one, that is,
$\mathcal{U}$ is a cover of $X$ if $X=\bigcup \mathcal{U}$ and
$X\notin \mathcal{U}$.

 An open cover $\mathcal{U}$ of a space $X$ is:

 $\bullet$ an {\it $\omega$-cover} if every finite subset of $X$ is contained in a
 member of $\mathcal{U}$.

$\bullet$ a {\it $\gamma$-cover} if it is infinite and each $x\in
X$ belongs to all but finitely many elements of $\mathcal{U}$.

$\bullet$ {\it $\gamma_F$-shrinkable} if $\mathcal{U}$ is a
cozero $\gamma$-cover and there exists a $\gamma$-cover $\{F_U :
U\in \mathcal{U}\}$ of zero-sets of $X$ with $F_U\subset U$ for
every $U\in \mathcal{U}$.

$\bullet$ {\it $\omega$-groupable}  if there is a partition of the cover into finite parts such
that for each finite set $F\subseteq X$ and all but finitely many parts $\mathcal{P}$ of
the partition, there is a set $U\in \mathcal{P}$ with $F\subseteq U$ \cite{kosch}.

\bigskip
For a topological space $X$ we denote:

$\bullet$ $\mathcal{O}$ --- the family of all open covers of $X$;

$\bullet$ $\mathcal{O}^{\omega}_{cz}$ --- the family of all
countable cozero covers of the space $X$;

$\bullet$ $\Gamma$ --- the family of all open $\gamma$-covers of
the space $X$;

$\bullet$ $\Gamma_{cz}$ --- the family of all cozero
$\gamma$-covers of the space $X$;


$\bullet$ $\Omega^{gr}$ --- the family of open $\omega$-groupable
covers of the space $X$;

$\bullet$ $\Gamma_F$ --- the family of all cozero
$\gamma_F$-shrinkable covers of the space $X$.

\medskip
Since any infinite part of the $\gamma$-cover is also a $\gamma$-cover, we further assume that all $\gamma_F$-shrinkable covers are countable.

\medskip

Let us recall that a topological space $X$ is {\it Hurewicz} if
$X$ has the property $U_{fin}(\mathcal{O},\Gamma)$.

\section{The projectively Hurewicz property}

A space $X$ is {\it projectively Hurewicz} provided every
separable metrizable continuous image of $X$ is Hurewicz.

In (\cite{bcm}, Theorem 30), M. Bonanzinga, F. Cammaroto, M.
 Matveev proved

\begin{theorem}\label{th33} The following
conditions are equivalent for a space $X$:

\begin{enumerate}

\item $X$ is projectively $U_{fin}(\mathcal{O},\Gamma)$
$[projectively Hurewicz]$;

\item Every Lindel$\ddot{o}$f continuous image of $X$ is Hurewicz;

\item for every continuous mapping $f: X \mapsto
\mathbb{R}^{\omega}$, $f(X)$ is Hurewicz;

\item for every continuous mapping $f: X \mapsto
\mathbb{R}^{\omega}$, $f(X)$ is bounded;

\item  $X$ satisfies $U_{fin}(\mathcal{O}^{\omega}_{cz},\Gamma)$.

\end{enumerate}

\end{theorem}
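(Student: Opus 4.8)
The plan is to establish a single cycle of implications $(2)\Rightarrow(1)\Rightarrow(3)\Rightarrow(5)\Rightarrow(2)$ and then to prove $(3)\Leftrightarrow(4)$ separately; together these yield all five equivalences. Two tools will recur throughout. The first is the standard correspondence between countable cozero covers of $X$ and continuous maps into $\mathbb{R}^{\omega}$: preimages of the basic cozero sets of $\mathbb{R}^{\omega}$ are cozero, and conversely a countable cozero cover $\{\mathrm{coz}(f_k)\}$ determines the diagonal map $f=\Delta_k f_k$, with the images $f(X)\subseteq\mathbb{R}^{\omega}$ being exactly the separable metrizable continuous images of $X$. The second is a $\gamma$-cover transport principle: if $f\colon X\to Y$ is a continuous surjection, $\mathcal{F}_n=f^{-1}(\mathcal{G}_n)$, and $\{\bigcup\mathcal{F}_n\}$ is a $\gamma$-cover of $X$, then $\{\bigcup\mathcal{G}_n\}$ is a $\gamma$-cover of $Y$, and symmetrically for pullbacks.

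First, $(2)\Rightarrow(1)$ is immediate since every separable metrizable space is Lindel\"of. For $(1)\Rightarrow(3)$, note that $f(X)$ is separable metrizable for every continuous $f\colon X\to\mathbb{R}^{\omega}$, hence Hurewicz by $(1)$; this and the fact that every separable metrizable continuous image embeds in $\mathbb{R}^{\omega}$ show that $(1)$ and $(3)$ are two phrasings of one condition. For $(3)\Rightarrow(5)$, I would take a sequence $(\mathcal{U}_n)$ of countable cozero covers, write each member as $\mathrm{coz}(f_{n,k})$, form the diagonal map $f\colon X\to\mathbb{R}^{\omega}$, and observe that $\mathcal{U}_n$ is the $f$-preimage of a countable cozero cover $\mathcal{W}_n$ of $Y=f(X)$. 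Applying the Hurewicz property of $Y$ to $(\mathcal{W}_n)$ and pulling the selection back along $f$ gives the required $\gamma$-cover of $X$, so $X$ satisfies $U_{fin}(\mathcal{O}^{\omega}_{cz},\Gamma)$.

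To close the cycle with $(5)\Rightarrow(2)$, let $Z=f(X)$ be a Lindel\"of continuous image and $(\mathcal{V}_n)$ a sequence of open covers of $Z$. By Lindel\"ofness each $\mathcal{V}_n$ admits a countable subcover, and by complete regularity together with Lindel\"ofness it may be refined to a countable cozero cover $\mathcal{C}_n$; then $f^{-1}(\mathcal{C}_n)$ is a countable cozero cover of $X$. Feeding $(f^{-1}(\mathcal{C}_n))$ into $(5)$, transporting the selection forward to $Z$, and enlarging each chosen cozero set to a member of $\mathcal{V}_n$ containing it, I obtain finite subfamilies of the $\mathcal{V}_n$ whose unions form a $\gamma$-cover of $Z$, so $Z$ is Hurewicz. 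Finally, for $(3)\Leftrightarrow(4)$ I would invoke the classical Hurewicz boundedness theorem: a separable metrizable space is Hurewicz if and only if each of its continuous images in $\mathbb{R}^{\omega}$ is bounded with respect to eventual dominance $\le^{*}$. Then $(3)\Rightarrow(4)$ follows by applying the theorem to the identity image of $f(X)$, while $(4)\Rightarrow(3)$ follows because any continuous image of $f(X)$ in $\mathbb{R}^{\omega}$ has the form $(g\circ f)(X)$ and is therefore bounded by $(4)$, forcing $f(X)$ to be Hurewicz.

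The cozero/map bookkeeping, the Lindel\"of refinement, and the $\gamma$-cover transport are all routine. The genuinely substantial ingredient, and the step I expect to be the main obstacle, is the equivalence $(3)\Leftrightarrow(4)$: it rests entirely on the classical Hurewicz theorem, so I will need to state that result precisely (in particular the meaning of ``bounded'' as $\le^{*}$-domination in $\mathbb{R}^{\omega}$) and handle the quantifier ``every continuous image'' with care, since boundedness of a single set is strictly weaker than the Hurewicz property and the two are reconciled only after quantifying over all continuous maps into $\mathbb{R}^{\omega}$.
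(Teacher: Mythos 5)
Your cycle $(2)\Rightarrow(1)\Rightarrow(3)\Rightarrow(5)\Rightarrow(2)$ plus $(3)\Leftrightarrow(4)$ is sound, and each of the routine steps (the diagonal-map correspondence between countable cozero covers and maps into $\mathbb{R}^{\omega}$, the Lindel\"of-plus-Tychonoff refinement to countable cozero covers, and the forward/backward transport of $\gamma$-covers along a continuous surjection) checks out as you describe. But note the reference point: the paper does not prove this theorem at all --- it quotes it verbatim as Theorem 30 of Bonanzinga--Cammaroto--Matveev \cite{bcm} --- so there is no in-paper argument to compare against, and your proposal must stand on its own.

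That said, you have correctly located the crux but left it as a black box in a way that needs one warning. The genuinely classical Hurewicz theorem concerns images in the Baire space $\mathbb{N}^{\omega}$ and is adequate only for zero-dimensional (or at least Lindel\"of zero-dimensional) spaces: a connected non-Hurewicz separable metrizable space has only singleton continuous images in $\mathbb{N}^{\omega}$, all trivially bounded, so the $\mathbb{N}^{\omega}$ version of the theorem you invoke is \emph{false} for general separable metrizable spaces. The $\mathbb{R}^{\omega}$ version you actually need (``$Y$ separable metrizable is Hurewicz iff every continuous image of $Y$ in $\mathbb{R}^{\omega}$ is bounded under eventual dominance'') is true, but it is essentially the kernel of \cite{bcm}'s Theorem 30 itself, so citing it makes your argument nearly circular; you should prove it. The proof is short and gives $(4)\Rightarrow(5)$ directly: given a countable cozero cover, pass to the increasing cover $U_m=U_{1}\cup\dots\cup U_{m}$ with witnesses $f_m:X\to[0,1]$, set $s_m=\max\{f_1,\dots,f_m\}$ and
\[
g=\sum_{m=1}^{\infty}\bigl(1-\min(1,\,m\,s_m)\bigr),
\]
which is continuous (the sum is locally finite since $s_m$ is nondecreasing in $m$ and eventually bounded away from $0$ on a neighborhood of each point) and satisfies $g(x)\ge k$ whenever $x\notin U_k$, i.e.\ $g(x)<k$ forces $x\in U_k$. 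Diagonalizing the functions $g_n$ built from the $n$-th cover yields a map into $\mathbb{R}^{\omega}$ whose boundedness, guaranteed by $(4)$, converts a dominating sequence $b(n)$ into the finite selections $\{U^{n}_{1},\dots,U^{n}_{\lceil b(n)\rceil+1}\}$ whose unions form a $\gamma$-cover. With this supplied, your quantifier handling in $(4)\Rightarrow(3)$ --- composing $g\circ f$ so that \emph{all} images of $f(X)$ are bounded, since a single bounded set need not be Hurewicz --- is exactly right.
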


\begin{proposition}(Proposition 31 in \cite{bcm})\label{pro4}

\begin{enumerate}

\item Every $\sigma$-pseudocompact space is projectively Hurewicz.

\item Every space of cardinality less than $\mathfrak{b}$ is
projectively Hurewicz.

\item The projectively Hurewicz property is preserved by
continuous images, by countably unions, by $C^*$-embedded
zero-sets, and by cozero sets.

\end{enumerate}

\end{proposition}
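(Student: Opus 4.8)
The plan is to reduce every assertion to the characterization in Theorem~\ref{th33}, namely that $X$ is projectively Hurewicz if and only if $f(X)$ is Hurewicz (equivalently bounded) for every continuous map $f\colon X\to\mathbb{R}^{\omega}$ (items (3)--(4)). Throughout I will use three standard facts about the genuine Hurewicz property on separable metrizable spaces: it is preserved by continuous images, it is inherited by closed subspaces, and it is preserved by countable unions; the last two together show that every $F_{\sigma}$ subset of a Hurewicz space is Hurewicz.

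For (1), write $X=\bigcup_n X_n$ with each $X_n$ pseudocompact and take a continuous $f\colon X\to\mathbb{R}^{\omega}$. Each $f(X_n)$ is a pseudocompact subspace of the metrizable space $\mathbb{R}^{\omega}$, hence compact, so $f(X)=\bigcup_n f(X_n)$ is $\sigma$-compact and therefore Hurewicz; by item (3) of Theorem~\ref{th33}, $X$ is projectively Hurewicz. For (2), if $|X|<\mathfrak{b}$ and $f\colon X\to\mathbb{R}^{\omega}$ is continuous, then $f(X)$ is separable metrizable with $|f(X)|<\mathfrak{b}$; every continuous image of $f(X)$ in $\mathbb{N}^{\mathbb{N}}$ again has cardinality $<\mathfrak{b}$ and is therefore dominated by a single function (the defining property of $\mathfrak{b}$), so $f(X)$ is Hurewicz and item (3) applies.

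The first two preservation assertions in (3) are immediate from item (3): a separable metrizable continuous image of a continuous image (resp.\ of a member of a countable cover) of $X$ is again a continuous image (resp.\ a countable union of continuous images) of $X$, and the Hurewicz property is closed under continuous images and countable unions. The real work lies in the passage to $C^{*}$-embedded zero-sets and to cozero-sets, where the obstacle is that the Hurewicz property is \emph{neither} hereditary \emph{nor} preserved by open subspaces, so one cannot simply restrict a witnessing map; the point is to engineer an extension over all of $X$ that realizes the relevant image as a closed or $F_\sigma$ subset of the Hurewicz space $f(X)$.

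The device I would use handles both cases uniformly. Let $Z=h^{-1}(0)$ be a zero-set of $X$ with $h\colon X\to[0,1]$, and let $f=(f_i)_i\colon Z\to\mathbb{R}^{\omega}$ be continuous. Replacing each $f_i$ by $\arctan f_i$ yields bounded functions on $Z$; if $Z$ is $C^{*}$-embedded these extend to $\tilde\varphi_i\colon X\to[-\tfrac{\pi}{2},\tfrac{\pi}{2}]$, and the map $\Phi=((\tilde\varphi_i)_i,h)\colon X\to\mathbb{R}^{\omega}\times[0,1]$ satisfies $\Phi(Z)=\Phi(X)\cap(\mathbb{R}^{\omega}\times\{0\})$, so $\Phi(Z)$ is a \emph{closed} subset of the Hurewicz space $\Phi(X)$, hence Hurewicz; applying $\tan$ coordinatewise to the first factor recovers $f(Z)$ as its continuous image, so $f(Z)$ is Hurewicz. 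For a cozero-set $O=\{g>0\}$ with $g\colon X\to[0,1]$ and $f=(f_i)_i\colon O\to\mathbb{R}^{\omega}$, set instead $u_i=g\cdot\arctan f_i$ on $O$ and $u_i=0$ off $O$; the factor $g$ forces each $u_i$ to extend continuously across the boundary, and $\Psi=((u_i)_i,g)\colon X\to\mathbb{R}^{\omega}\times[0,1]$ realizes $\Psi(O)=\Psi(X)\cap(\mathbb{R}^{\omega}\times(0,1])$ as an \emph{open}, hence $F_{\sigma}$, subset of the Hurewicz space $\Psi(X)$, which is therefore Hurewicz; dividing the first coordinates by the (positive) last one and applying $\tan$ recovers $f(O)$ as a continuous image, so $f(O)$ is Hurewicz. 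The main obstacle is precisely arranging these extensions and the closed/$F_{\sigma}$ identifications, since that is exactly what converts the failure of hereditariness into the statement one needs.
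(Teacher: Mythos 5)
The paper offers no proof of this proposition at all: it is quoted verbatim from Proposition 31 of \cite{bcm}, so there is no internal argument to compare against, and your proposal should be judged as a reconstruction of the cited result. Judged that way, it is essentially correct and takes the natural route through Theorem~\ref{th33}: items (1) and (3) are fine as written (pseudocompact metrizable images are compact; continuous images and countable unions reduce immediately to the Hurewicz property's closure under images and countable unions), and your treatment of the two delicate cases is sound. The identifications $\Phi(Z)=\Phi(X)\cap(\mathbb{R}^{\omega}\times\{0\})$ and $\Psi(O)=\Psi(X)\cap(\mathbb{R}^{\omega}\times(0,1])$ both check out; the damping factor $g$ in $u_i=g\cdot\arctan f_i$ really does force continuity across the boundary of $O$ (since $|u_i|\le\frac{\pi}{2}g$); and the tangent map, respectively the map $((v_i),t)\mapsto(\tan(v_i/t))_i$, is continuous on an open set of $\mathbb{R}^{\omega}\times(0,1]$ containing $\Psi(O)$, so $f(Z)$ and $f(O)$ are recovered as continuous images of closed, respectively $F_{\sigma}$, subspaces of a Hurewicz space. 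The one step that deserves a caveat is (2): the implication ``all continuous images of a separable metrizable space in $\mathbb{N}^{\mathbb{N}}$ are bounded $\Rightarrow$ Hurewicz'' is the Hurewicz--Rec\l aw characterization, which is usually stated for \emph{zero-dimensional} Lindel\"{o}f spaces. It does apply here, because a metrizable space of cardinality less than $\mathfrak{b}\le\mathfrak{c}$ is zero-dimensional (around each point, continuum many spheres are pairwise disjoint, so some sphere is empty), but you should either note this or, more simply, argue directly: enumerate countable covers $\mathcal{U}_n=\{U^n_m: m\in\mathbb{N}\}$ of $f(X)$, assign to each point $x$ the function $g_x(n)=\min\{m: x\in\bigcup_{k\le m}U^n_k\}$, bound the family $\{g_x\}$ of size less than $\mathfrak{b}$ by a single $g$, and take $\mathcal{V}_n=\{U^n_k: k\le g(n)\}$; this proves outright that every Lindel\"{o}f space of cardinality less than $\mathfrak{b}$ is Hurewicz, with no appeal to the characterization.
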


\begin{definition} Let $\mathcal{S}=\{S_n: n\in \mathbb{N}\}$ be a family of
subsets of a space $X$ and $x\in X$. Then $\mathcal{S}$ {\it
weakly converges} to $x$ if for every neighborhood $W$ of $x$ there
is a sequence $(s_n: n\in \mathbb{N})$ such that $s_n\in S_n$ for
each $n\in \mathbb{N}$ and there is $n'$ such that $s_n\in W$ for
each $n>n'$.

\end{definition}

Let us recall that a subset $A$ of $X$ {\it converges} to $x$ if $A$ is infinite, $x\notin A$, and for each
neighborhood $U$ of $x$, $A\setminus U$ is finite. We write $x=\lim A$ if $A$ converges
to $x$. Consider the
following collections:

$\bullet$ $\Gamma_x=\{A\subseteq X : x=\lim A\}$;

$\bullet$ $w\Gamma_{x}=$ the family of all subsets of $X$ admitting a partition $\mathcal{S}=\{S_n: n\in \mathbb{N}\}$  such that for every $n$ the set $S_n$ is finite and  $\mathcal{S}$  weakly converges to $x$.

\begin{theorem} The following
conditions are equivalent for a space $X$:

\begin{enumerate}

\item $C_p(X)$ satisfies $S_{fin}(\Gamma_{{\bf 0}},
w\Gamma_{{\bf0}})$;

\item  $X$ satisfies $S_{fin}(\Gamma_{F}, \Omega^{gr})$;

\item $X$ satisfies $U_{fin}(\Gamma_{F}, \Gamma)$;

\item $X$ satisfies $U_{fin}(\mathcal{O}^{\omega}_{cz},\Gamma)$;

\item $X$ is projectively Hurewicz.

\end{enumerate}

\end{theorem}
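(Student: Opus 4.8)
The plan is to treat $(4)\Leftrightarrow(5)$ as already supplied by Theorem~\ref{th33}, and to note that $(4)\Rightarrow(3)$ is immediate from the inclusion $\Gamma_F\subseteq\mathcal{O}^{\omega}_{cz}$: every $\gamma_F$-shrinkable cover is in particular a countable cozero cover (using the Remark that these may be taken countable), so $U_{fin}(\mathcal{O}^{\omega}_{cz},\Gamma)$ is formally the stronger hypothesis. It then remains to establish $(1)\Leftrightarrow(2)$ by a $C_p$-duality, $(2)\Leftrightarrow(3)$ by a groupable/$\gamma$ manipulation on $X$, and to bridge the $\Gamma_F$-cluster to the projective Hurewicz property by proving $(3)\Rightarrow(5)$; together with $(4)\Leftrightarrow(5)$ and the trivial $(4)\Rightarrow(3)$ this closes all equivalences.

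The heart of $(1)\Leftrightarrow(2)$ is the correspondence between sequences converging to $\mathbf{0}$ in $C_p(X)$ and $\gamma_F$-shrinkable covers of $X$. Given $A=\{f_m:m\in\mathbb{N}\}$ with $f_m\to\mathbf{0}$, the cozero sets $U_m=\{x:|f_m(x)|<1\}$ with zero-set shrinkings $\{x:|f_m(x)|\le 1/2\}$ both form $\gamma$-covers (tested against $[\{x\},(-1,1)]$ and $[\{x\},(-1/2,1/2)]$), so $\{U_m\}\in\Gamma_F$. Conversely, from $\mathcal{U}=\{U_m\}\in\Gamma_F$ with zero-set $\gamma$-cover $\{F_m\}$, Urysohn functions $f_m$ with $f_m|_{F_m}=0$ and $f_m|_{X\setminus U_m}=1$ satisfy $f_m\to\mathbf{0}$, since each point lies in all but finitely many $F_m$. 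Crucially, both $w\Gamma_{\mathbf{0}}$ and $\Omega^{gr}$ are defined through a \emph{free} finite partition, so the partition witnessing one transfers directly to a partition witnessing the other, and the vanishing $f_m|_{F_m}=0$ is what lets a single selected function beat an arbitrary neighborhood $[F,(-\epsilon,\epsilon)]$ for every $\epsilon$ at once.

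For $(2)\Leftrightarrow(3)$ I would use the standard equivalence of $U_{fin}(\cdot,\Gamma)$ with $S_{fin}(\cdot,\Omega^{gr})$, the nontrivial direction being handled by first replacing each $\mathcal{U}_n\in\Gamma_F$ by the family of its increasing finite unions $U'_{n,k}=\bigcup_{i\le k}U_{n,i}$ (again a cozero $\gamma$-cover, and still $\gamma_F$-shrinkable via the unions $\bigcup_{i\le k}F_{n,i}$ of the zero-set shrinkings); then the finite subfamily produced by $U_{fin}$ is itself an initial segment, so its union is a single set $U'_{n,K_n}$ absorbing any prescribed finite $F$, which is exactly what $\omega$-groupability requires. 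The remaining bridge $(3)\Rightarrow(5)$ is proved by taking a separable metrizable continuous image $g\colon X\to M\subseteq\mathbb{R}^{\omega}$: in the metric space $M$ every closed set is a zero-set, so an arbitrary open cover of $M$ refines to increasing unions carrying zero-set $\gamma$-shrinkings, and pulling these back along $g$ yields $\gamma_F$-shrinkable covers of $X$; applying $U_{fin}(\Gamma_F,\Gamma)$ upstairs and pushing forward gives the Hurewicz selection on $M$.

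I expect the main obstacle to be the faithful matching, in $(1)\Leftrightarrow(2)$, of the weak convergence with a finite partition defining $w\Gamma_{\mathbf{0}}$ against $\omega$-groupability: one must verify that the quantifier ``$s_n\in W$ for all $n>n'$'' in weak convergence corresponds exactly to the ``all but finitely many parts $\mathcal{P}$ contain a single $U\supseteq F$'' in groupability, \emph{for every} finite test set $F$ and every $\epsilon$, not merely for singletons at threshold $1$. The resolution is precisely the $\gamma_F$-shrinkability: the zero-set shrinkings force the associated Urysohn functions to vanish on $F$, so covering $F$ by one member of a part yields a selected function lying in $[F,(-\epsilon,\epsilon)]$ simultaneously for all $\epsilon$. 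The secondary conceptual point, which I would isolate as a lemma, is that arbitrary countable cozero covers need not be $\gamma_F$-shrinkable, so the passage to the full property in $(3)\Rightarrow(5)$ genuinely relies on the metrizability of the image to manufacture the zero-set shrinkings.
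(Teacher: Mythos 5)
Your outer architecture is sound and in places cleaner than the paper's: $(4)\Leftrightarrow(5)$ by citation, the trivial monotonicity $(4)\Rightarrow(3)$ from $\Gamma_F\subseteq\mathcal{O}^{\omega}_{cz}$, and a direct $(3)\Rightarrow(5)$ via separable metrizable images (the paper instead cites its earlier work for $(5)\Rightarrow(3)$ and proves $(3)\Rightarrow(4)$ intrinsically with the $W^n_{m,i}$, $H^n_{m,i}$ construction; your image-based route is a legitimate variant). However, two of your internal steps have genuine gaps. The worse one is your combinatorial shortcut for $(3)\Rightarrow(2)$: after passing to increasing unions $U'_{n,k}=\bigcup_{i\le k}U_{n,i}$ and applying $U_{fin}$, you obtain a $\gamma$-cover by sets $U'_{n,K_n}$, and you claim that $F\subseteq U'_{n,K_n}$ ``is exactly what $\omega$-groupability requires.'' It is not: $S_{fin}(\Gamma_F,\Omega^{gr})$ forces you to select members of the \emph{original} covers $\mathcal{U}_n$, and $\omega$-groupability demands a \emph{single} member of a part containing $F$; from $F\subseteq\bigcup_{i\le K_n}U_{n,i}$ you get each point of $F$ in some $U_{n,i}$ with $i\le K_n$, but possibly no single $U_{n,i}$ containing all of $F$ (the $\gamma$-cover property gives $F\subseteq U_{n,i}$ only for $i$ beyond a threshold that may exceed $K_n$), and the sets $U'_{n,K_n}$ themselves are not members of $\mathcal{U}_n$. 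This equivalence is genuinely nontrivial; the paper avoids the combinatorics by mapping $X$ into $\mathbb{R}^{\omega}$ via the diagonal product $h$ of the witnessing functions $f_U$ (so that the covers become coordinate cylinders with $h^{-1}(h(U))=U$, which is what makes the pull-back of a groupable selection legitimate) and invoking Tsaban's theorem $U_{fin}(\mathcal{O},\Gamma)=S_{fin}(\Gamma,\Omega^{gr})$ for separable metric spaces; the converse direction is also laundered through images, since in your setting the groupable partition of $\bigcup_n B_n$ need not respect the $n$-indexing and so does not directly yield finite $\mathcal{F}_n\subseteq\mathcal{U}_n$ for $U_{fin}$.

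The second gap is in $(2)\Rightarrow(1)$: you build the covers with thresholds fixed independently of $i$, namely $U_{i,j}=\{x:|f_{i,j}(x)|<1\}$ with shrinking $\{|f_{i,j}|\le 1/2\}$. Then covering a finite $F$ by a selected member certifies only $|f_{i,j}|<1$ on $F$, so the transferred partition weakly converges to $\mathbf{0}$ only at threshold $1$, not for every neighborhood $[F,(-\epsilon,\epsilon)]$ with $\epsilon<1$. You correctly flag this very obstacle, but your proposed resolution (``the zero-set shrinkings force the associated Urysohn functions to vanish on $F$'') applies only in the direction $(1)\Rightarrow(2)$, where you \emph{construct} Urysohn functions vanishing on the $F_{i,j}$; in $(2)\Rightarrow(1)$ the functions $f_{i,j}$ are \emph{given} elements of $T_i\in\Gamma_{\mathbf{0}}$ and need not vanish anywhere. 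The paper's fix is to let the threshold shrink with $i$: $V_{i,j}=f_{i,j}^{-1}((-\frac{1}{i},\frac{1}{i}))$ with zero-set shrinking $F_{i,j}=f_{i,j}^{-1}([-\frac{1}{i+1},\frac{1}{i+1}])$; then, given $F$ and $\epsilon$, since each selected family $D_i$ is finite, only finitely many parts of the partition meet sets with $\frac{1}{i}\ge\epsilon$, and for all remaining parts the member $V_{i(k),j(k)}\supseteq F$ yields $f_{i(k),j(k)}\in[F,(-\epsilon,\epsilon)]$. Your $(1)\Rightarrow(2)$ sketch, by contrast, matches the paper's argument and is correct as stated.
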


\begin{proof}

$(3)\Leftrightarrow(2)$.
By Theorem 3.4 in \cite{ts1}, the equality $U_{fin}(\mathcal{O}, \Gamma)=S_{fin}(\Gamma, \Omega^{gr})$ is true in the class of metric separable spaces. Let $X$ satisfies $U_{fin}(\Gamma_{F}, \Gamma)$. By Theorem 5.4 in \cite{os19} and Theorem \ref{th33},
$U_{fin}(\Gamma_{F}, \Gamma)=U_{fin}(\mathcal{O}^{\omega}_{cz},\Gamma)$, i.e., $X$ is projectively Hurewicz.

Let $(\mathcal{U}_n : n\in \mathbb{N})$ be a sequence of countable $\gamma_F$-shrinkable covers of $X$. For every $n\in \mathbb{N}$ and every $U\in \mathcal{U}_n$ fix a continuous function $f_U:X \rightarrow \mathbb{R}$ such that $U=f^{-1}_U[\mathbb{R}\setminus\{0\}]$. Put
$h=\prod \{f_U : U\in \mathcal{U}_n, n\in \mathbb{N}\}$. Then $h$ is a continuous mapping from $X$ onto $h(X)\subseteq \mathbb{R}^{\omega}$, thus $h(X)$ satisfies $U_{fin}(\mathcal{O}, \Gamma)=S_{fin}(\Gamma, \Omega^{gr})$.
Let $h(\mathcal{U}_n)=\{h(U): U\in \mathcal{U}_n\}$.
Since $(h(\mathcal{U}_n) : n\in \mathbb{N})$ be a sequence of $\gamma$-covers of $h(X)$ we get (2). Since a continuous metrizable image of a space satisfying the property $S_{fin}(\Gamma_{F}, \Omega^{gr})$ is a space with this property and $S_{fin}(\Gamma_{F}, \Omega^{gr})=S_{fin}(\Gamma, \Omega^{gr})$ for metrizable spaces, the implication $(2)\Rightarrow(3)$ is proved similarly.

$(4)\Leftrightarrow(5)$. By Theorem \ref{th33}.

$(5)\Rightarrow(3)$. By Theorem 5.4 in \cite{os19} (or Theorem 4.1 in \cite{os10}).

$(3)\Rightarrow(4)$. Let $(\mathcal{U}_n : n\in \mathbb{N})$ be a
sequence of countable cozero covers of $X$. Enumerate
$\mathcal{U}_n=\{U^n_m: m\in \mathbb{N}\}$.

For $n,m\in \mathbb{N}$, fix a continuous function $f_{n,m}: X\rightarrow [0,1]$ that witnesses $U^n_m$ being cozero, i.e. $f^{-1}(0,1]=U^n_m$. For every $n,m,i\in \mathbb{N}$, let us define

$W^n_{m,i}=f^{-1}_{n,m}(\frac{1}{i+1},1]$ and $H^n_{m,i}=f^{-1}_{n,m}[\frac{1}{i+1},1]$.

Clearly, the set $W^n_{m,i}$ is cozero and $H^n_{m,i}$ is a zero-set. Note that

$W^n_{m,i}\subseteq H^n_{m,i}\subseteq W^n_{m,i+1}\subseteq U^n_m$ and $U^n_m=\bigcup\limits_{i=1}^{\infty} W^n_{m,i}.$

For $k\in\mathbb{N}$, write $W^n_k=\bigcup \{W^n_{m,i+1}: i,m\leq k\}$ and let $\mathcal{W}_n=\{W^n_1, W^n_2,...\}$. Observe that $\mathcal{W}_n\in \Gamma_F$ because $H^n_k=\bigcup\{H^n_{m,i}: i,m\leq k\}$ is a zero-set contained in $W^n_k$. Moreover the family $\{H^n_k: k\in \mathbb{N}\}$ is a $\gamma$-cover of $X$ since one readily checks that the family $\{\bigcup\{W^n_{m,i}: i,m\leq k\}: k\in \mathbb{N}\}$ is a $\gamma$-cover and $\bigcup\{W^n_{m,i}: i,m\leq k\}\subseteq H^n_k$.
Now apply the property $U_{fin}(\Gamma_F,\Gamma)$ to the sequence $(\mathcal{W}_n: n\in \mathbb{N})$ together with the fact that $\mathcal{W}_n$ is a finer cover that $\mathcal{U}_n$ for all $n$.

$(1)\Rightarrow(2)$.  Let $\{\mathcal{V}_i: i\in \mathbb{N}\}\in
[\Gamma_{F}]^{\omega}$. Note that we assume that all $\gamma_F$-shrinkable covers are countable.

 Since $\mathcal{V}_i=\{V_{i,j}: j\in
\mathbb{N}\}\in \Gamma_{F}$, there is $\{F_{i,j}: j\in
\mathbb{N}\}\in \Gamma$ such that $F_{i,j}$ is a zero-set in $X$
and $F_{i,j}\subset V_{i,j}\in \mathcal{V}_i$ for each $j\in
\mathbb{N}$. Let $T_i=\{f_{i,j}\in C_p(X): f_{i,j}(F_{i,j})=0$ and
$f_{i,j}(X\setminus V_{i,j})=1$ for each $i,j\in \mathbb{N}\}$.
Since $\{F_{i,j}:j\in\mathbb{N}\}$ is a $\gamma$-cover, we have $\lim\limits_{j\rightarrow \infty}T_i={\bf 0}$ for
each $i\in \mathbb{N}$. By (1), there are finite subsets $T'_i$ of
$T_i$ and a partition of the set $\bigcup T'_i$ into finite parts
such that for each neighborhood $O=[K,(-\epsilon,\epsilon)]$ of
the function {\bf 0} where $K$ is a finite subset of $X$ and $\epsilon>0$, and all but finitely many parts $\mathcal{P}$
of the partition, there is a function $g\in \mathcal{P}$ with
$g\in O$.

Let $\mathcal{P}=\{\{g_{l,1},...,g_{l,k_l}\}: l\in \mathbb{N}\}$.
Since $g_{l,m}=f_{i_s,j_s}$ for some $i_s,j_s\in \mathbb{N}$, we
can consider $Q=\{V_{l,m}: V_{l,m}=V_{i_s,j_s}$, $f_{i_s,j_s}(X\setminus
V_{i_s,j_s})=1$,
 $f_{i_s,j_s}=g_{l,m}$, $l\in \mathbb{N}\}$. Then $Q$
has a partition $\mathcal{Q}=\{\{V_{l,1},...,V_{l,k_l}\}: l\in\mathbb{N}\}$
and, for any finite subset $K$ of $X$  all but finitely many parts $\mathcal{Q}$
of the partition, there is $V_{l,k}$ with $K\subseteq V_{l,k}$. Thus, $Q\in \Omega^{gr}$.

$(2)\Rightarrow(1)$. Let $T_i\in\Gamma_{\bf 0}$ for each $i\in \mathbb{N}$.
By passing to a countable infinite subset, we can without loss of generality
assume that each $T_i$ is countable. Enumerate $T_i=\{f_{i,j}\in C_p(X): j\in
\mathbb{N}\}$.

For $i,j$ define $V_{i,j}=f^{-1}_{i,j}((-\frac{1}{i},\frac{1}{i}))$  (we can without loss of generality
assume that each $V_{i,j}$ is non-empty), and let $\mathcal{V}_i=\{V_{i,j}: j\in \mathbb{N}\}$.

 Note that $V_{i,j}$ is a cozero-set in $X$ for each $i,j\in \mathbb{N}$.

Thus we have a mapping $\Phi: \bigcup \mathcal{V}_i \rightarrow \bigcup T_i$ such that $\Phi(V_{i,j})=f_{i,j}$ for $i,j\in \mathbb{N}$.

 Since $\lim\limits_{j\rightarrow \infty}T_i={\bf 0}$, for any finite subset $F$ of $X$ and
$\epsilon>0$ (we can assume that $\epsilon<\frac{1}{i}$), there is $j'\in
\mathbb{N}$ such that $f_{i,j}\in [F,(-\epsilon,
\epsilon)]$ for each $j>j'$. Thus, $F\subset V_{i,j}$
for each $j>j'$. Thus,  $\mathcal{V}_i\in \Gamma_{cz}$.

For $i,j$ define $F_{i,j}=f^{-1}_{i,j}([-\frac{1}{i+1},\frac{1}{i+1}])$, and let $\mathcal{F}_i=\{F_{i,j}: j\in \mathbb{N}\}$.

 Then $F_{i,j}\subset V_{i,j}$
for each $j\in \mathbb{N}$ and $\mathcal{F}_i\in \Gamma$. Note
also that $F_{i,j}$ is a zero-set and $V_{i,j}$ is a cozero-set
in $X$ for each $j\in \mathbb{N}$. It follows that
$\mathcal{V}_i\in \Gamma_{F}$.

By (2), there are finite subsets $D_i\subset \mathcal{V}_i$ for
each $i\in \mathbb{N}$ such that $\bigcup D_i$ is a cozero
$\omega$-groupable cover of the space $X$.

Let
$\mathcal{P}=\{\mathcal{P}_k : k\in \mathbb{N}\}$ be a partition of the cover $\bigcup D_i$ into finite parts such that for each finite set
$F\subset X$ and all but finitely many parts $\{\mathcal{P}_k: k\in \mathbb{N}\}$  of the partition, there is a set $V_{i(k),j(k)}\in \mathcal{P}_k$  with
$F\subset V_{i(k),j(k)}$.

For each $k$ define $S_k=\{f_V:  \Phi(V)=f_V, V\in \mathcal{P}_k\}$. The family $\mathcal{S}=\{S_k : k\in \mathbb{N}\}$ is a partition  of $\bigcup \{f_{i,j}: V_{i,j}\in D_i, i\in \mathbb{N}\}$. Then, for each finite
set $F\subset X$ and $\epsilon>0$, and all but finitely many parts  of
the partition $\mathcal{S}$, there is a function $f_{i(k),j(k)}\in
\mathcal{S}_k$ with $f_{i(k),j(k)}\in [F,(-\epsilon,\epsilon)]$.
Thus, $\bigcup \{f_{i,j}: f_{i,j}\in T_i, V_{i,j}\in D_i, i\in \mathbb{N}\}\in  w\Gamma_{\bf
0}$ and  $C_p(X)$ satisfies $S_{fin}(\Gamma_{{\bf 0}}, w\Gamma_{\bf
0})$.
\end{proof}

Note that the property $S_{fin}(\Gamma_{x}, w\Gamma_x)$ is a {\it
topological} property. Thus, if $C_p(X)$ is homeomorphic to
$C_p(Y)$ and $C_p(X)$ satisfies $S_{fin}(\Gamma_{\bf 0}, w\Gamma_{\bf 0})$, then $C_p(Y)$ satisfies
$S_{fin}(\Gamma_{g}, w\Gamma_g)$ for each $g\in C_p(Y)$.

\begin{theorem} Suppose that $C_p(X)$ and $C_p(Y)$ are
homeomorphic. Then $X$ has the projectively Hurewicz property if
and only if $Y$ has the projectively Hurewicz property.
\end{theorem}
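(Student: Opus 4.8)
The plan is to bootstrap the entire argument off the equivalence $(1)\Leftrightarrow(5)$ of the preceding theorem, which recasts the ``external'' covering property of being projectively Hurewicz as an ``internal'' local property of the function space, namely $S_{fin}(\Gamma_{\bf 0}, w\Gamma_{\bf 0})$ at the point ${\bf 0}\in C_p(X)$. Since this reformulation lives entirely inside $C_p$, it is the natural bridge for transferring the property across a homeomorphism $C_p(X)\cong C_p(Y)$. First I would observe that if $X$ is projectively Hurewicz, then by that equivalence $C_p(X)$ satisfies $S_{fin}(\Gamma_{\bf 0}, w\Gamma_{\bf 0})$.

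The crux is the transfer step, and here I would use the two facts recorded in the remark immediately preceding the statement. Because $S_{fin}(\Gamma_x, w\Gamma_x)$ is a purely topological property anchored at the point $x$, any homeomorphism $h\colon C_p(X)\to C_p(Y)$ carries the validity of the property at ${\bf 0}$ to its validity at the image point $h({\bf 0})\in C_p(Y)$; that is, $C_p(Y)$ satisfies $S_{fin}(\Gamma_{h({\bf 0})}, w\Gamma_{h({\bf 0})})$. Since $C_p(Y)$ is homogeneous, the property at an arbitrary point is equivalent to the property at ${\bf 0}$, so $C_p(Y)$ satisfies $S_{fin}(\Gamma_{\bf 0}, w\Gamma_{\bf 0})$. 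Applying the equivalence $(1)\Leftrightarrow(5)$ of the preceding theorem now to $Y$ yields that $Y$ is projectively Hurewicz.

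Finally, since the hypothesis ``$C_p(X)$ is homeomorphic to $C_p(Y)$'' is symmetric in $X$ and $Y$, the same argument with the roles reversed gives the converse implication, completing the equivalence. The only point that needs care --- and the place where a naive argument could break --- is that a topological homeomorphism of function spaces need not fix ${\bf 0}$ (indeed it need not be linear), so one cannot directly read off the property at ${\bf 0}$ in $C_p(Y)$; the homogeneity of $C_p(Y)$ is exactly what repairs this, and it is essential that the characterizing property be formulated locally at a single point rather than demanded uniformly. This is precisely why the earlier theorem was phrased in terms of $S_{fin}(\Gamma_{\bf 0}, w\Gamma_{\bf 0})$ rather than some globally stated covering condition, and it is the reason the result upgrades from $l$-invariance to full $t$-invariance.
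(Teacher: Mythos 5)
Your proof is correct and follows exactly the paper's intended argument: the paper derives the theorem from the equivalence $(1)\Leftrightarrow(5)$ of the preceding theorem together with the remark that $S_{fin}(\Gamma_x, w\Gamma_x)$ is a topological property transferable by the homeomorphism and by the homogeneity of $C_p(Y)$ to every point, in particular to $\bf{0}$. Your added observation about why homogeneity is needed (the homeomorphism need not fix $\bf{0}$) is precisely the content of the paper's remark preceding the theorem.
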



\medskip
{\bf Question}. Let $\mathcal{P}\in\{Menger, Rothberger,
Scheepers, S_1(\Gamma, \mathcal{O})\}$. Will the projectively
$\mathcal{P}$ property be $t$-invariant ?

\medskip

\medskip

{\bf Conjecture}. The projectively Scheepers Diagram is
$t$-invariant, i.e., each projectively selection property in the
Scheepers Diagram is $t$-invariant.

\medskip
If the conjecture is true, then, applying Velichko's result, the
Scheepers Diagram is $l$-invariant.

\medskip

{\bf Acknowledgements.} The author would like to thank the referee for careful reading and
valuable suggestions.


\bibliographystyle{model1a-num-names}
\bibliography{<your-bib-database>}

\end{document}